\documentclass[a4paper,11pt]{amsart}

\usepackage{amssymb,latexsym,amsmath,amscd}
\usepackage{xspace}
\usepackage{enumerate}
\usepackage{graphicx}
\usepackage{vmargin}
\usepackage{todonotes}

\theoremstyle{plain}
\newtheorem{theorem}{Theorem}[section]
\newtheorem*{theorem*}{Theorem}
\newtheorem{proposition}[theorem]{Proposition}
\newtheorem{corollary}[theorem]{Corollary}

\theoremstyle{definition}

\newtheorem{remark}[theorem]{Remark}

\newcommand{\enm}[1]{\ensuremath{#1}}          

\newcommand{\cal}[1]{\mathcal{#1}}

\newcommand{\QQ}{\enm{\mathbb{Q}}}

\renewcommand{\AA}{\enm{\mathbb{A}}}
\newcommand{\PP}{\enm{\mathbb{P}}}

\newcommand{\Ll}{\enm{\cal{L}}}

\newcommand{\Oo}{\enm{\cal{O}}}

\newcommand{\Rr}{\enm{\cal{R}}}

\renewcommand{\phi}{\varphi}
\renewcommand{\theta}{\vartheta}
\renewcommand{\epsilon}{\varepsilon}

\title[Finite surjective parametrizations of surfaces]{On the existence of finite surjective parametrizations of affine surfaces}
\author{Edoardo Ballico and Claudio Fontanari}
\date{}
\thanks{This research project was partially supported by GNSAGA of INdAM and by PRIN 2017 ``Moduli Theory and Birational Classification".\\
{\em 2020 Mathematics Subject Classification.} 14E05, 14Q10.
\newline \noindent{{\em Keywords and phrases.} Rational variety, Affine surface, Surjective parametrization, Finite cover.}}

\begin{document}

\begin{abstract}
We investigate surjective parametrizations of rational algebraic varieties, in the vein of recent work by Jorge Caravantes, J. Rafael Sendra, David Sevilla, and 
Carlos Villarino. In particular, we show how to construct plenty of examples of affine surfaces $S$ not admitting a finite surjective morphism $f: \AA^2\to S$.
\end{abstract}

\maketitle

\section{Introduction}

Effective parametrizations of rational algebraic varieties, both in the affine and in the projective case, are the object of an intense investigation 
motivated by applications to computer graphics. In particular, the research team composed by Jorge Caravantes, J. Rafael Sendra, David Sevilla, and 
Carlos Villarino has recently produced several interesting partial results with both a negative and a positive flavour. Indeed, in \cite{cssv18} they establish necessary 
conditions for the existence of a \emph{birational} surjective rational map from $\AA^2$ to an affine surface $S$, hence deducing 
plenty of examples of affine surfaces which do not admit a birational surjective parametrization from a Zariski open subset 
of $\AA^2$. On the other hand, in \cite{cssv21} they construct an algorithm that, given any parametrization of a rational projective surface $\overline{S}$, 
produces a cover of $\overline{S}$ by just three \emph{generically finite} rational charts, provided that the input parametrization satisfy certain technical assumptions 
(namely, either its base locus is empty, or its Jacobian, specialized at each base point, has rank two). 

In the present note we first introduce the notion of {\it pseudo-chart} for an integral $n$-dimensional variety $X$ as a morphism 
$f: \AA^n\to X$ such that $\dim f(\AA^n) =n$ and we provide several examples of projective varieties covered by a finite number of 
pseudo-charts with finite fibers. Next, we generalize Theorem 3.1 and Corollary 3.5 of \cite{cssv18} to \emph{finite} morphisms:

\begin{theorem}\label{a10}
Let $f: \AA^2\to S$ be a surjective finite morphism with $S$ a smooth affine rational variety. Let $\overline{S}\supset S$ be any smooth projective surface 
containing $S$. Then $\overline{S}\setminus S$ is a union of rational curves, there are at least
$\rho(\overline{S})$ of them (where $\rho(.)$ denotes the Picard number) and they generate $\mathrm{Pic}(\overline{S})\otimes \QQ$.
\end{theorem}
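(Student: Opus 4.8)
The plan is to compactify and resolve, then feed the result into two standard facts. First I would choose the compactification $\AA^2\subset\PP^2$ and extend $f$ to a rational map $\phi\colon\PP^2\dashrightarrow\overline S$. Since $f$ is already a morphism on the whole of $\AA^2$, the indeterminacy locus of $\phi$ lies on the line at infinity $L:=\PP^2\setminus\AA^2$; hence any resolution $\pi\colon X\to\PP^2$ by blow-ups of points is an isomorphism over $\AA^2$, and the resulting morphism $g\colon X\to\overline S$ satisfies $g=f$ on $\pi^{-1}(\AA^2)\cong\AA^2$. As $X$ is proper and $g$ is dominant, $g$ is surjective, so
\[ \overline S \;=\; g\bigl(\pi^{-1}(\AA^2)\bigr)\cup g(D) \;=\; S\cup g(D), \qquad D:=\pi^{-1}(L)=X\setminus\pi^{-1}(\AA^2). \]
Every irreducible component of $D$ is a rational curve (the strict transform of $L$, or of an exceptional $\PP^1$ under further blow-ups), so every positive-dimensional component of $g(D)$ is rational as well. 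On the other hand, because $S$ is affine and $\overline S$ is a smooth projective surface, the complement $\Delta:=\overline S\setminus S$ is purely one-dimensional: an isolated point of the complement would, by normality of $\overline S$, allow every global function of $S$ to extend across it, producing a non-trivial birational morphism from $S$ to an affine variety, which is impossible (this is the same fact already used in \cite{cssv18}). Since $\Delta\subseteq g(D)$ and $\Delta$ is a curve, each irreducible component $\Delta_i$ of $\Delta$ must coincide with a one-dimensional component of $g(D)$, hence is rational. This settles the first assertion.

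For the remaining two assertions I would use the restriction exact sequence of divisor class groups
\[ \bigoplus_{i=1}^{r}\ZZ\,[\Delta_i]\;\longrightarrow\;\mathrm{Pic}(\overline S)\;\longrightarrow\;\mathrm{Pic}(S)\;\longrightarrow\;0, \]
valid since $\overline S$ and $S$ are smooth and $\Delta$ has pure codimension one. The crucial point is that $\mathrm{Pic}(S)$ is a torsion group, and this is precisely where the finiteness of $f$ enters: $f$ being finite, flat (miracle flatness, as $\AA^2$ is Cohen--Macaulay and $S$ is regular of the same dimension with zero-dimensional fibres) and surjective of some degree $d$, the projection formula gives $f_*f^*=d\cdot\mathrm{id}$ on $\mathrm{Pic}(S)=\mathrm{Cl}(S)$, while $\mathrm{Cl}(\AA^2)=0$; hence $d\cdot L=0$ for every $L\in\mathrm{Pic}(S)$ (equivalently, the norm map of the finite flat morphism $f$ satisfies $\mathrm{Nm}_f(f^*L)=L^{\otimes d}=\mathcal{O}_S$). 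Therefore $\mathrm{Pic}(S)\otimes\QQ=0$, and tensoring the exact sequence above with $\QQ$ shows that $[\Delta_1],\dots,[\Delta_r]$ generate $\mathrm{Pic}(\overline S)\otimes\QQ$. Finally, $\overline S$ is a rational surface (it contains the rational surface $S$ as a dense open subset), so $q(\overline S)=0$, $\mathrm{Pic}(\overline S)=\mathrm{NS}(\overline S)$, and $\mathrm{Pic}(\overline S)\otimes\QQ$ is a $\QQ$-vector space of dimension $\rho(\overline S)$; being spanned by the $r$ classes $[\Delta_i]$, this forces $r\ge\rho(\overline S)$.

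I expect the only genuinely new ingredient, compared with the birational case treated in \cite{cssv18}, to be the torsion-ness of $\mathrm{Pic}(S)$ coming from the finite morphism $f$; once that is in hand everything else is formal. The steps that will need the most care are the two invocations of standard facts in exactly the form required here — that $\overline S\setminus S$ is purely one-dimensional, and the existence and multiplicativity of pullback/pushforward (or of the norm map) for the finite morphism $f$ — together with the verification that the resolution $g$ genuinely restricts to $f$ over $\AA^2$, so that $g\bigl(\pi^{-1}(\AA^2)\bigr)=S$ and not merely $S\subseteq g(X)$.
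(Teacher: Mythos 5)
Your proposal is correct and follows essentially the same route as the paper: eliminate the indeterminacy of the extension of $f$ to $\PP^2$ by blowing up points over the line at infinity (so $\overline{S}\setminus S$ is covered by images of rational curves), and use finiteness of $f$ together with $\mathrm{Pic}(\AA^2)=0$ to show that every $\Ll\in \mathrm{Pic}(\overline{S})$ is torsion when restricted to $S$, hence that the boundary curves generate $\mathrm{Pic}(\overline{S})\otimes \QQ$ and are at least $\rho(\overline{S})$ in number. The only cosmetic differences are that you kill $\mathrm{Pic}(S)$ via the degree formula $f_\ast f^\ast=\deg(f)\cdot \mathrm{id}$ on $\mathrm{Cl}(S)$ (equivalently the norm map), whereas the paper applies the projection formula to $G=f_\ast\Oo_{\AA^2}$ and takes determinants to get $\Rr^{\otimes r}\cong\Oo_S$, and that you spell out the (standard) purity of the affine complement $\overline{S}\setminus S$, which the paper leaves implicit.
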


As a straightforward consequence we get: 

\begin{corollary}
Let $\overline{S}$ be a smooth projective rational surface containing an ample curve $C$ of positive genus. Then the affine surface $S := \overline{S} \setminus C$ does not 
admit a finite surjective morphism $f: \AA^2\to S$. 
\end{corollary}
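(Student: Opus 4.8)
The plan is to obtain the corollary as an immediate consequence of Theorem~\ref{a10} via a contradiction argument. The first step is to verify that $S=\overline{S}\setminus C$ satisfies the hypotheses of Theorem~\ref{a10}. It is smooth since $\overline{S}$ is smooth and $C$ is a closed subset, and it is rational since $\overline{S}$ is. Most importantly, $S$ is affine: as $C$ is ample, a sufficiently large multiple $mC$ is very ample, so it embeds $\overline{S}$ into a projective space with $C$ supported on a hyperplane section, whence $S$ is a closed subvariety of the complementary affine chart. Thus Theorem~\ref{a10} applies, with $\overline{S}\supset S$ itself as the chosen smooth projective surface.

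The second step is the contradiction. Assume a finite surjective morphism $f\colon \AA^2\to S$ exists. By Theorem~\ref{a10}, $\overline{S}\setminus S$ is then a union of rational curves (indeed at least $\rho(\overline{S})$ of them, generating $\mathrm{Pic}(\overline{S})\otimes\QQ$). But $\overline{S}\setminus S=C$ has positive genus, so it is not rational and therefore cannot be written as a union of rational curves. This contradiction shows that no such $f$ exists.

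I do not anticipate a genuine obstacle here, since the content is entirely carried by Theorem~\ref{a10}. The two points that deserve a line of care are the affineness of $S$ --- which is the classical fact that the complement of an ample divisor in a projective variety is affine --- and the reading of ``curve of positive genus''. With the natural interpretation that $C$ is irreducible (for instance a smooth ample curve of genus $\ge 1$, such as a smooth plane cubic when $\overline{S}=\PP^2$), or more generally that $C$ has an irreducible component of positive genus, the argument above is complete; a brief remark suffices to settle this point, after which the corollary follows at once.
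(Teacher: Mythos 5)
Your proof is correct and follows exactly the route the paper intends: the paper states the corollary as a direct consequence of Theorem~\ref{a10}, and your argument (affineness of $S$ via ampleness of $C$, then the contradiction with $\overline{S}\setminus S=C$ being a union of rational curves despite $C$ having positive genus) is precisely that straightforward deduction, with the affineness and irreducibility points appropriately noted.
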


We work over the complex field $\mathbb{C}$ (or, equivalently, over any algebraically closed field of characteristic zero).

\section{The results} 
Let $X$ be an integral quasi-projective variety. Assume that $X$ is rational. Set $n:= \dim X$. A {\it quasi-chart} of $X$ is a pair $(E,j)$, where $E$ is a proper closed subset of $\AA^n$ and $j: \AA^n\setminus E\to X$ is an open embedding.
A quasi-chart is said to be {\it principal} if either $E=\emptyset$ or $E$ is an effective divisor of $\AA^n$.
\begin{remark}
Fix a proper closed subset $E\subset \AA^n$. By a theorem of Eisenbud and Evans (\cite{ee}), $E$ is the set-theoretical intersection of at most $n$ divisors. Thus any quasi-chart $(E,j)$  is the union of at most $n$ principal quasi-charts. \end{remark}

A {\it pseudo-chart} of $X$ is a morphism $f: \AA^n\to X$ such that $\dim f(\AA^n) =n$. A pseudo-chart $f: \AA^n\to X$ is called a {\it finite pseudo-chart} (resp. {\it pseudo-chart with finite fibers}) if $f$ is finite (resp. $f$ has finite fibers). To have a finite pseudo-chart is a very strong restriction on $X$ if its image is smooth and in all cases $X$ must be affine. We recall that by Hironaka's desingularization every smooth quasi-projective variety is a Zariski open subset of a smooth projective variety. Since $f$ is flat, $f_\ast (\Oo_{\AA^2})$ is an $\Oo_U$-locally free sheaf whose rank is $\deg (f)$. 

\begin{proposition}\label{a3}
Let $Y$ be an integral quasi-projective variety of dimension $m\ge 0$. There is a degree $2$ flat and surjective morphism $\AA^1\times Y\to \PP^1\times Y$.
\end{proposition}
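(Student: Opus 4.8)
The plan is to reduce the proposition to the case $\dim Y=0$, that is, to the construction of a single degree $2$ flat surjective morphism $\phi\colon\AA^1\to\PP^1$. Granting such a $\phi$, the morphism $\phi\times\mathrm{id}_Y\colon\AA^1\times Y\to\PP^1\times Y$ is exactly the base change of $\phi$ along the first projection $\PP^1\times Y\to\PP^1$ --- indeed $\AA^1\times_{\PP^1}(\PP^1\times Y)\cong\AA^1\times Y$ compatibly with the maps to $\PP^1\times Y$ --- so it is automatically flat and surjective, both properties being stable under base change. Only its degree then needs a separate argument, which I would carry out on function fields.

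For the construction of $\phi$ I would take
\[
\phi\colon\AA^1\to\PP^1,\qquad \phi(t)=[\,t^2+1:t\,];
\]
this is a genuine morphism since $t^2+1$ and $t$ have no common zero. Three things must be checked. \emph{Surjectivity}: one has $\phi(0)=[1:0]$, while for $a\in\CC$ the point $[a:1]$ is the image of any root of $X^2-aX+1$, which exists and is nonzero (the product of its roots equals $1$). \emph{Degree}: through $\phi^\ast$ the function field of $\PP^1$ is identified with the subfield $\CC(t+t^{-1})\subseteq\CC(t)$, and $t$ satisfies the monic quadratic $X^2-(t+t^{-1})X+1$ over $\CC(t+t^{-1})$ without lying in it, because the $\CC$-automorphism $t\mapsto t^{-1}$ of $\CC(t)$ fixes $t+t^{-1}$; hence $[\CC(t):\CC(t+t^{-1})]=2$. \emph{Flatness}: $\AA^1$ is integral and $\phi$ is dominant, so the unique associated point of $\AA^1$ maps to the generic point of the smooth curve $\PP^1$, and this already forces $\phi$ to be flat; alternatively one may invoke miracle flatness, since every fibre of $\phi$ is $0$-dimensional while $\AA^1$ is Cohen--Macaulay and $\PP^1$ is regular.

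Repeating the degree computation verbatim with $\CC$ replaced by $\CC(Y)$ shows that $\CC(Y)(t)$ has degree $2$ over its subfield $\CC(Y)(t+t^{-1})$, i.e.\ that $\phi\times\mathrm{id}_Y$ has degree $2$; together with the first paragraph this proves the proposition. I do not expect a serious obstacle: the two points worth attention are that the naive squaring map $t\mapsto[t^2:1]$ is \emph{not} surjective (it misses $[1:0]$, since every complex number is a square), which is why one uses a map of shape $t\mapsto t+t^{-1}$ covering the point at infinity as well; and that over $[1:0]$ the morphism $\phi$ is not finite --- its fibre there is a single reduced point, whereas over $\AA^1\subset\PP^1$ it restricts to the finite free extension $\CC[t+t^{-1}]\hookrightarrow\CC[t,t^{-1}]$ --- so flatness at that point must come from the smooth-target criterion rather than from local freeness of a pushforward. (This non-finiteness is unavoidable and explains why only a flat morphism is claimed: a finite surjective morphism $\AA^1\times Y\to\PP^1\times Y$ would restrict, over a closed point of $Y$, to a finite surjective morphism $\AA^1\to\PP^1$, forcing $\PP^1$ to be affine.)
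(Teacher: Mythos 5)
Your proof is correct and follows essentially the same route as the paper: the paper simply asserts the existence of a degree $2$ flat surjective morphism $h\colon \AA^1\to\PP^1$ and takes $(h,\mathrm{Id}_Y)$, while you supply an explicit such $h$ (namely $t\mapsto[t^2+1:t]$, i.e.\ $t\mapsto t+t^{-1}$) together with the verifications of surjectivity, degree and flatness, and then pass to the product exactly as the paper does. Your closing observation that such a morphism cannot be finite is a worthwhile sanity check, since it explains why the statement only claims flatness.
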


\begin{proof}
There is a degree $2$ flat and surjective morphism $h: \AA^1\to \PP^1$. Take $f =(h,\mathrm{Id}_Y): \AA^1\times Y\to \PP^1\times Y$.
\end{proof}

\begin{proposition}\label{a4}
$\PP^1\times \PP^1$ is covered by one degree $4$ pseudo-chart with finite fibers.
\end{proposition}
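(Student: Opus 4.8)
The plan is to realize $f$ as the external product of two copies of the degree $2$ flat surjective morphism $h\colon\AA^1\to\PP^1$ invoked in the proof of Proposition \ref{a3}. Concretely, I would take $h(t)=[t^2-1:t]$, which is a morphism on all of $\AA^1$ (the naive choice $t\mapsto[t^2:1]$ must be discarded, since it misses $[1:0]$), which is onto $\PP^1$, and whose generic fibre has two points, so that $\deg h=2$. Identifying $\AA^2$ with $\AA^1\times\AA^1$, I then set
\[
f:=h\times h\colon \AA^1\times\AA^1\longrightarrow\PP^1\times\PP^1 .
\]

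The verification splits into three routine points. First, $f$ is surjective, being a product of surjections. Second, $h$ is a non-constant morphism from a curve and so has finite fibres; the fibre of $f$ over $(p,q)$ is $h^{-1}(p)\times h^{-1}(q)$, a finite set, so $f$ has finite fibres. In particular $f$ is dominant, hence $\dim f(\AA^2)=2$ and $f$ is a pseudo-chart; it cannot be a \emph{finite} pseudo-chart, since $\AA^2$ is affine while $\PP^1\times\PP^1$ is proper. Third, for the degree I would write $\CC(\PP^1\times\PP^1)=\CC(x,y)$ and $\CC(\AA^2)=\CC(s,t)$ with $x$ a degree $2$ function of $s$ alone and $y$ a degree $2$ function of $t$ alone, and read off $\deg f=[\CC(s,t):\CC(x,y)]=[\CC(s,t):\CC(s,y)]\cdot[\CC(s,y):\CC(x,y)]=2\cdot 2=4$.

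I do not expect a genuine obstacle here. The one point demanding care is to pick $h$ that is actually onto all of $\PP^1$ — the point at infinity included — and to obtain the degree via multiplicativity along the tower of function fields, rather than by an ad hoc count that could drop under the various specializations.
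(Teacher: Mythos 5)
Your proof is correct and is essentially the paper's own argument: the paper composes the two maps $(h,\mathrm{Id})$ supplied by Proposition \ref{a3}, and that composition is exactly your product $h\times h\colon\AA^1\times\AA^1\to\PP^1\times\PP^1$; your explicit choice $h(t)=[t^2-1:t]$ and the function-field computation of the degree are fine. Your remark that the map has finite fibres but cannot be finite (since $\AA^2$ is affine and $\PP^1\times\PP^1$ proper) is also correct, and in fact slightly more careful than the wording in the paper's proof.
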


\begin{proof}
By Proposition \ref{a3} there are degree $2$ surjective, flat and finite morphisms  $h_1: \AA^2 \to \AA^1\times \PP^1$ and $h_2: \AA^1\times \PP^1\to \PP^1\times \PP^1$. Take $h_2\circ h_1$ as a surjective pseudo-chart.
\end{proof}

\begin{proposition}\label{a5}
$\PP^2$ is covered by one degree $8$ pseudo-chart with finite fibers.
\end{proposition}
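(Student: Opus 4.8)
The plan is to realize the required morphism as a composition $\AA^2\to\PP^1\times\PP^1\to\PP^2$, where the first arrow is the degree $4$ pseudo-chart with finite fibers supplied by Proposition~\ref{a4}, and the second is a degree $2$ cover of $\PP^2$ by the quadric surface $\PP^1\times\PP^1$. So I would begin by fixing, via Proposition~\ref{a4}, a surjective morphism $h\colon\AA^2\to\PP^1\times\PP^1$ of degree $4$ all of whose fibers are finite; surjectivity is built into the construction, $h$ being a composition of the surjective morphisms of Proposition~\ref{a3}.

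Next I would exhibit a degree $2$ surjective finite morphism $\sigma\colon\PP^1\times\PP^1\to\PP^2$. The most transparent choice is the quotient by the involution interchanging the two factors, under which $\PP^2$ is identified with the symmetric square of $\PP^1$; explicitly one may take
\[
\sigma\bigl((x_0:x_1),(y_0:y_1)\bigr)=\bigl(x_0y_0 : x_0y_1+x_1y_0 : x_1y_1\bigr),
\]
a morphism defined on all of $\PP^1\times\PP^1$ because the three bihomogeneous forms of bidegree $(1,1)$ on the right have no common zero. Being the quotient of a projective variety by a finite group, $\sigma$ is finite and surjective of degree $2$; equivalently, one may view $\PP^1\times\PP^1$ as a smooth quadric in $\PP^3$ and let $\sigma$ be the linear projection from a point not on the quadric, which is finite of degree $2$ since no line through that point lies on the quadric. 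In either description the verification is routine; the only point needing any care is base-point freeness (resp.\ that no line through the chosen point is contained in the quadric).

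Finally I would set $f:=\sigma\circ h\colon\AA^2\to\PP^2$. It is surjective as a composition of surjective morphisms, so $\dim f(\AA^2)=2$ and $f$ is a pseudo-chart; each fiber $f^{-1}(p)=h^{-1}\bigl(\sigma^{-1}(p)\bigr)$ is a finite union of finite sets, hence finite; and $\deg f=\deg\sigma\cdot\deg h=2\cdot 4=8$. There is no essential obstacle here: the argument reduces to the bookkeeping that degrees multiply in a tower and that finiteness of fibers is preserved under composition, together with the elementary fact that $\PP^1\times\PP^1$ is a double cover of $\PP^2$. (If one wishes, one can moreover record that $f$ is flat: $\sigma$ is flat by ``miracle flatness'' since its source is Cohen--Macaulay, its target is regular and it is quasi-finite, and $h$ is flat as a composition of the flat morphisms of Proposition~\ref{a3}; but this is not needed for the statement.)
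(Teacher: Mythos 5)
Your proposal is correct and follows essentially the same route as the paper: compose the degree $4$ pseudo-chart $\AA^2\to\PP^1\times\PP^1$ of Proposition~\ref{a4} with a degree $2$ finite surjective map $\PP^1\times\PP^1\to\PP^2$, so that degrees multiply to $8$ and finiteness of fibers is preserved. The only difference is that you make the double cover explicit (as the symmetric square quotient, or projection of the quadric from an external point), which the paper simply asserts.
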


\begin{proof}
There is a degree $2$ finite and flat surjective map $h: \PP^1\times\PP^1\to \PP^2$. Compose $h$ with the morphism given by Proposition \ref{a4}.
\end{proof}

\begin{proposition}\label{a6}
For any integer $n\ge 1$ the variety $(\PP^1)^n$ has a surjective, flat  and with finite fibers pseudo-chart of degree $2^{n-1}$.
\end{proposition}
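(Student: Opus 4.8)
The plan is to proceed by induction on $n$, with Proposition \ref{a3} as the sole recursive ingredient. The base case $n=1$ just asks for a surjective, flat, finite-fibred morphism $\AA^1\to\PP^1$, which Proposition \ref{a3} provides (take $Y$ to be a point). For the inductive step I would pass from $(\PP^1)^{k}$ to $(\PP^1)^{k+1}=\PP^1\times(\PP^1)^{k}$ as follows: starting from a surjective, flat, finite-fibred pseudo-chart $g\colon\AA^{k}\to(\PP^1)^{k}$, write $\AA^{k+1}=\AA^1\times\AA^{k}$, form $\mathrm{Id}_{\AA^1}\times g\colon\AA^1\times\AA^{k}\to\AA^1\times(\PP^1)^{k}$, and compose it with the degree $2$ morphism $\AA^1\times(\PP^1)^{k}\to\PP^1\times(\PP^1)^{k}=(\PP^1)^{k+1}$ supplied by Proposition \ref{a3} with $Y=(\PP^1)^{k}$. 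The composite is the desired pseudo-chart for $(\PP^1)^{k+1}$, its degree being twice that of $g$.

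It then remains to check that surjectivity (and not merely dominance), flatness, and finiteness of the fibres are preserved by the two operations involved. Surjectivity and finiteness of fibres pass through a composition at once, and through $g\mapsto\mathrm{Id}_{\AA^1}\times g$ because the fibre over a point $(a,q)$ is $\{a\}\times g^{-1}(q)$, the product of a point with a finite set -- which is precisely why Proposition \ref{a3} is phrased with ``surjective''. Flatness is stable under composition and under base change, hence survives as well; alternatively, once surjectivity and finiteness of fibres are in hand, flatness comes for free by miracle flatness, since $\AA^{n}$ is smooth (so Cohen--Macaulay), $(\PP^1)^{n}$ is regular, and the fibres are zero-dimensional.

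The only point at which one must think rather than compute is the bookkeeping of the degree: carried out crudely, with one application of Proposition \ref{a3} per coordinate, the recursion doubles the degree $n$ times and would give $2^{n}$, so obtaining the sharper value $2^{n-1}$ requires organizing the steps a little more economically -- effectively absorbing the contribution of one coordinate -- which is also what pins down the exact form of the base case. Apart from settling this, I expect no real obstacle: the substance of the statement lies entirely in Proposition \ref{a3}, combined with the observation that the class of surjective, flat, finite-fibred morphisms is closed under composition and under products with the identity.
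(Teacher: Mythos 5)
Your induction is, in substance, the paper's own argument: the paper proves this proposition by iterating Proposition \ref{a3} across the factors, taking $Y=(\AA^1)^{n-1}$, then $Y=\PP^1\times(\AA^1)^{n-2}$, and so on, which is exactly your step ``compose $\mathrm{Id}_{\AA^1}\times g$ with the map of Proposition \ref{a3}'' unrolled; your verifications that surjectivity, finiteness of fibres and flatness survive composition and base change (or follow from miracle flatness) are correct and are left implicit in the paper.

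The one point you could not settle, namely how to reach degree $2^{n-1}$ instead of $2^{n}$, is not a defect of your argument: the exponent in the statement is an off-by-one slip in the paper. Converting all $n$ affine factors requires $n$ applications of Proposition \ref{a3} and yields degree $2^{n}$; the paper's prescription of $n-1$ applications ends at $(\PP^1)^{n-1}\times\AA^1$ and therefore does not even give a surjection onto $(\PP^1)^n$. The value $2^{n}$ is also what Proposition \ref{a4} records for $n=2$ (degree $4$), and for $n=1$ the claimed degree $2^{0}=1$ is outright impossible: a degree-one morphism $\AA^1\to\PP^1$ with finite fibres is birational and quasi-finite onto a normal curve, hence an open immersion (equivalently, it is the restriction of a M\"obius transformation), and so it misses a point of $\PP^1$. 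Thus your instinct that one coordinate could be ``absorbed'' cannot be realized, at least for $n=1$, and the paper supplies no mechanism for it either; the statement you have actually proved, with $2^{n}$ in place of $2^{n-1}$, is the correct conclusion of this method, and your proof of it is complete.
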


\begin{proof}
Apply $n-1$ times Proposition \ref{a3}, first with $Y=(\AA^1)^{n-1}$, then with $Y =\PP^1\times (\AA^1)^{n-2}$, and so on.
\end{proof}

\begin{proposition}\label{a7}
For any integer $n\ge 1$ $\PP^n$ has a surjective, flat and with finite fibers pseudo-chart of degree $n!2^{n-1}$.
\end{proposition}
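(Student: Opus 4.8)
The plan is to exhibit $\PP^n$ as an $n$-fold symmetric product of $\PP^1$ and then compose the associated finite cover with the pseudo-chart of $(\PP^1)^n$ furnished by Proposition \ref{a6}. Recall the classical identification $\op{Sym}^n(\PP^1)\cong\PP^n$: associating to an unordered $n$-tuple of points of $\PP^1$ the class of the binary form of degree $n$ vanishing at those points identifies the $n$-fold symmetric product with $\PP\bigl(H^0(\PP^1,\Oo_{\PP^1}(n))\bigr)\cong\PP^n$. Under this identification the quotient morphism for the action of the symmetric group $S_n$ permuting the factors of $(\PP^1)^n$ becomes a morphism $\sigma: (\PP^1)^n\to\PP^n$; the plan is to show that $\sigma$ is surjective, flat and quasi-finite of degree $n!$, and then to set $f=\sigma\circ\phi$, where $\phi: \AA^n\to(\PP^1)^n$ is the pseudo-chart of Proposition \ref{a6}.

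First I would analyse $\sigma$. Surjectivity is immediate, since every effective divisor of degree $n$ on $\PP^1$ is the image of any of its orderings. Over the open subset of $\PP^n$ parametrizing reduced divisors, $\sigma$ restricts to a principal $S_n$-bundle, so $\sigma$ is generically $n!$-to-one and hence has degree $n!$; in particular all its fibers are finite (of cardinality at most $n!$), so $\sigma$ is quasi-finite, and being also proper --- its source being projective --- it is finite. Flatness of $\sigma$ then follows from ``miracle flatness'': $(\PP^1)^n$ is smooth, hence Cohen--Macaulay, $\PP^n$ is regular, and a finite, so equidimensional, morphism between varieties of the same dimension $n$ satisfies the local dimension equality that forces flatness.

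Next I would assemble the composition. Let $\phi: \AA^n\to(\PP^1)^n$ be the surjective, flat, finite-fibered pseudo-chart of degree $2^{n-1}$ provided by Proposition \ref{a6}, and put $f:=\sigma\circ\phi: \AA^n\to\PP^n$. Then $f(\AA^n)=\PP^n$, so $\dim f(\AA^n)=n$ and $f$ is a pseudo-chart; it is surjective; it is flat, being a composition of flat morphisms; and its fibers are finite, since for $p\in\PP^n$ the set $f^{-1}(p)=\phi^{-1}\bigl(\sigma^{-1}(p)\bigr)$ is the preimage under the quasi-finite morphism $\phi$ of the finite set $\sigma^{-1}(p)$. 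Finally, degree is multiplicative under composition --- at the level of function fields, $[\CC(\AA^n):\CC(\PP^n)]=[\CC(\AA^n):\CC((\PP^1)^n)]\cdot[\CC((\PP^1)^n):\CC(\PP^n)]$ --- so $\deg f=2^{n-1}\cdot n!=n!\,2^{n-1}$, as desired.

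The only point that needs genuine care is the study of the symmetrization morphism $\sigma$: the identification $\op{Sym}^n(\PP^1)\cong\PP^n$, the computation $\deg\sigma=n!$, and, above all, the flatness of $\sigma$; each is classical, the last a standard application of miracle flatness as indicated, and once $\sigma$ is in hand the rest is routine bookkeeping about composition of morphisms. I note in passing that one could bypass Proposition \ref{a6} entirely by forming $\op{Sym}^n$ of a degree-$2$ flat surjective morphism $\AA^1\to\PP^1$ and using the analogous identification $\op{Sym}^n(\AA^1)\cong\AA^n$; this produces a flat, surjective, finite-fibered pseudo-chart of $\PP^n$ of degree $2^n$, which for $n\ge 3$ is smaller than $n!\,2^{n-1}$.
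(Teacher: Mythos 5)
Your argument is correct, and it shares the paper's overall skeleton (produce a finite flat degree-$n!$ morphism $(\PP^1)^n\to\PP^n$ and compose it with the degree-$2^{n-1}$ pseudo-chart of Proposition \ref{a6}), but the key morphism is built differently. The paper takes the Segre embedding $X\subset\PP^{2^n-1}$ of $(\PP^1)^n$, notes $\deg X=n!$, and uses a \emph{general} linear projection to get a finite flat degree-$n!$ map onto $\PP^n$; you instead use the explicit Vieta/symmetrization morphism $\sigma:(\PP^1)^n\to\op{Sym}^n(\PP^1)\cong\PP^n$, checking finiteness (proper plus quasi-finite) and flatness via miracle flatness. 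The two are closely related --- your $\sigma$ is in fact the composition of the Segre embedding with one particular linear projection, whose center misses $X$ because a product of nonzero binary linear forms is nonzero --- but your version avoids any genericity argument and the degree computation for the Segre variety, replacing them with the classical $S_n$-quotient description and a standard flatness criterion, so it is arguably more self-contained; the paper's version is shorter and generalizes mechanically to other ambient degrees. Your closing remark is also a genuine improvement on the statement itself: taking $\op{Sym}^n$ of a degree-$2$ flat surjection $\AA^1\to\PP^1$ yields a flat, surjective, finite-fibered pseudo-chart $\AA^n\cong\op{Sym}^n(\AA^1)\to\PP^n$ of degree $2^n$, which beats $n!\,2^{n-1}$ for $n\ge 3$ (the same miracle-flatness argument applies, since $\AA^n$ is Cohen--Macaulay and the fibers are finite).
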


\begin{proof}
Let $X\subset \PP^r$, $r:= 2^n-1$, be the image of Segre embedding of $(\PP^1)^n$. Since $n!$ is the coefficient of $x_1\cdots x_n$ in the Taylor expansion of the polynomial $(1+x_1+\cdots +x_n)^n$, we have $\deg (X) =n!$. Thus a general linear projection gives a flat and finite morphism $f: (\PP^1)^n\to \PP^n$. Compose $f$ with any pseudo-chart given by Proposition~\ref{a6}.\end{proof}

\begin{proposition}\label{a8}
Let $X$ be a $\PP^r$-bundle over $\PP^n$. Then $X$ is covered by $n+1$  flat and with finite fibers pseudo-charts of degree $r!2^{r-1}$.
\end{proposition}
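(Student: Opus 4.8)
The plan is to reduce to Proposition~\ref{a7} by means of the standard affine cover of the base. Write $\pi: X\to\PP^n$ for the bundle projection and let $U_0,\dots,U_n\subset\PP^n$ be the standard affine charts, so that $U_i\cong\AA^n$ and $\PP^n=\bigcup_{i=0}^n U_i$. Put $X_i:=\pi^{-1}(U_i)$; then $X=\bigcup_{i=0}^n X_i$, each $X_i$ is a $\PP^r$-bundle over $U_i\cong\AA^n$, and, $X$ being integral (it is a $\PP^r$-bundle over $\PP^n$), each $X_i$ is a dense open subset of $X$. We may assume $r\ge 1$, the case $r=0$ being immediate.

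The key input is that a $\PP^r$-bundle over $\AA^n$ is trivial: writing $X_i=\PP(\Ee_i)$ for a rank-$(r+1)$ vector bundle $\Ee_i$ on $\AA^n$, the Quillen--Suslin theorem gives $\Ee_i\cong\Oo_{\AA^n}^{\oplus(r+1)}$, hence an isomorphism $\phi_i: \AA^n\times\PP^r\xrightarrow{\ \sim\ }X_i$ over $U_i$.

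Now apply Proposition~\ref{a7} with $n$ replaced by $r$: there is a surjective, flat pseudo-chart with finite fibers $g: \AA^r\to\PP^r$ of degree $r!\,2^{r-1}$. For $i=0,\dots,n$ define
\[
f_i:=\iota_i\circ\phi_i\circ(\mathrm{Id}_{\AA^n}\times g): \AA^n\times\AA^r=\AA^{n+r}\longrightarrow X,
\]
where $\iota_i: X_i\hookrightarrow X$ is the open immersion. The morphism $\mathrm{Id}_{\AA^n}\times g$ is the base change of $g$ along the second projection $\AA^n\times\PP^r\to\PP^r$, so it is flat, surjective, has finite fibers, and has degree $r!\,2^{r-1}$ (base change preserves the degree). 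Postcomposing with the isomorphism $\phi_i$ and the open immersion $\iota_i$ preserves flatness and finiteness of fibers, makes the image equal to $X_i$, and leaves the degree unchanged since $\CC(X)=\CC(X_i)$. Hence each $f_i$ is a flat pseudo-chart with finite fibers of degree $r!\,2^{r-1}$ with $f_i(\AA^{n+r})=X_i$ (in particular $\dim f_i(\AA^{n+r})=\dim X_i=n+r$), and
\[
\bigcup_{i=0}^n f_i(\AA^{n+r})=\bigcup_{i=0}^n X_i=\pi^{-1}\Big(\bigcup_{i=0}^n U_i\Big)=X,
\]
so the $n+1$ morphisms $f_0,\dots,f_n$ cover $X$, as required.

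The only step that is not purely formal is the triviality of $X_i$ over $U_i\cong\AA^n$, that is, the appeal to Quillen--Suslin; this is exactly what allows us to argue chart by chart and hence to obtain precisely $n+1$ pseudo-charts. Everything else (flatness, finiteness of fibers, the degree, and the covering property) follows formally from base change and from $\bigcup_{i=0}^n U_i=\PP^n$.
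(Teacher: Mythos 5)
Your proof is correct and follows essentially the same route as the paper: cover $\PP^n$ by the standard affine charts $U_i\cong\AA^n$, use Quillen--Suslin to trivialize $\pi^{-1}(U_i)\cong\AA^n\times\PP^r$, and apply Proposition~\ref{a7} (with $r$ in place of $n$) crossed with $\mathrm{Id}_{\AA^n}$. You merely spell out the details (base change preserving flatness, finite fibers and degree) that the paper leaves implicit.
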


\begin{proof}
Write $X =\PP(E)$, where $E$ is a rank $r+1$ vector bundle on $\PP^n$ and call $\pi: X\to \PP^n$. Fix homogeneous coordinates $x_0,\dots,x_n$ on $\PP^n$ and write $U_i:= \{x_i\ne 0\}$. Thus $U_i\cong \AA^n$ and $\PP^n =U_0\cup \cdots \cup U_n$.
Quillen and Suslin proved that every algebraic vector bundle on $\AA^n$ is trivial (\cite{lam}). Thus $\pi^{-1}(U_i)\cong \AA^n\times \PP^r$. Apply Proposition \ref{a7} to each $\pi ^{-1}(U_i)$.
\end{proof}





\emph{Proof of Theorem \ref {a10}.}
Since $\overline{S}$ is a smooth and projective rational surface, $\mathrm{Pic}(\overline{S})$ is free abelian group of rank $\rho(\overline{S})$. Set $r:= \deg(f)$. Let $\Gamma \subset \AA^2\times S$ denote the graph of $f$ and $\Gamma '$ its closure in $\PP^2\times \overline{S}$. Let $p_1: \Gamma '\to \PP^2$ and $p_2: \Gamma '\to \overline{S}$ be the restrictions to $\Gamma '$ of the two projections of $\PP^2\times \overline{S}$. Since $\Gamma$ is the graph of a finite morphism, $\Gamma$ is closed in $\AA^2\times \overline{S}$, $p_{1|\Gamma} : \Gamma \to \AA^2$ is an isomorphism and 
$p_{2|\Gamma}: \Gamma \to \overline{S}$ is a degree $r$ finite morphism with $S$ as its image. Since $\Gamma$ is closed in $\AA^2\times \overline{S}$ and $\Gamma'$ is a projective variety, 
$p_{1}(\Gamma ' \setminus \Gamma) =\PP^2\setminus \AA^2$ and $p_{2}(\Gamma ' \setminus \Gamma) =\overline{S}\setminus S$. Since $\Gamma '$ is the graph of a degree $r$ rational map $\PP^2\dasharrow \overline{S}$, there is a finite sequence of blowing-ups of points $u: Y\to \PP^2$  and a morphism $v: Y\to \overline{S}$ such that the rational maps $f$ and $v\circ u^{-1}$ coincide 
(\cite[Th. II.7]{beau}). Since $f$ is a morphism at all points of $\AA^2$, the images of all points whose blowing-ups give $u$ are contained in $\PP^2\setminus \AA^2$ (as follows from the proof of 
\cite[Th. II.7]{beau}), i.e., $u_{|u^{-1}(\AA^2)}:u^{-1}(\AA^2)\to \AA^2$ is an isomorphism. Since $\Gamma$ is closed in $\AA^2\times \overline{S}$, we get $\overline{S}\setminus S =  v(Y\setminus u^{-1}(\AA^2))$. Thus $\overline{S}\setminus S$ is a finite union of rational curves,  say $D_1,\dots ,D_s$. To see that $D_1,\dots ,D_s$ generate $\mathrm{Pic}(\overline{S})\otimes \QQ$ (hence there are at least $\rho(\overline{S})$ of them) it is sufficient to prove that $\Ll_{|S}^{\otimes x_\Ll} \cong \Oo_S$ for any $\Ll\in \mathrm{Pic}(\overline{S})$ and some positive integer $x_\Ll$. We claim that we may take 
$x_\Ll =r$ for all $\Ll$. Indeed, fix $\Ll\in \mathrm{Pic}(\overline{S})$ and set $\Rr:= \Ll_{|S}$. Since every line bundle on $\AA^2$ is trivial, $f^\ast (\Rr)\cong \Oo_{\AA^2}$. Since $\AA^2$ and $S$ are smooth and $f$ is a finite morphism, $f$ is flat (\cite[Cor. at p. 158]{fis}). Thus $G:= f_\ast(\Oo_{\AA^2})$ is a rank $r$ vector bundle on $S$ (\cite[III, Prop. 9.2 (e) and Ex. 9.7.1]{h}). For any line bundle $R$ on $S$, the projection formula gives $f_\ast (f^\ast R) \cong G\otimes R$ for every line bundle $R$ on $S$. Since $f^\ast (\Rr)\cong \Oo_{\AA ^2}$, we get $G\otimes \Rr \cong G$. By taking determinants, we get 
$\Rr^{\otimes r}\cong \Oo_S$, thus concluding the proof.
\qed

\providecommand{\bysame}{\leavevmode\hbox to3em{\hrulefill}\thinspace}

\vspace{0.5cm}

\noindent
Edoardo Ballico \newline
Dipartimento di Matematica \newline
Universit\`a di Trento \newline
Via Sommarive 14 \newline
38123 Trento, Italy. \newline
E-mail address: edoardo.ballico@unitn.it

\vspace{0.3cm}

\noindent
Claudio Fontanari \newline
Dipartimento di Matematica \newline
Universit\`a di Trento \newline
Via Sommarive 14 \newline
38123 Trento, Italy. \newline
E-mail address: claudio.fontanari@unitn.it


\begin{thebibliography}{99}

\bibitem{beau} A. Beauville, Complex Algebraic Surfaces, second edition, London Mathematical Society Student Texts 34, Cambridge University Press, Cambridge, 1996.

\bibitem{cssv18} J. Caravantes, J.R. Sendra, D. Sevilla, C. Villarino, On the existence of birational surjective parametrizations of affine surfaces. J. Algebra 501 (2018), 206--214. 

\bibitem{cssv21} J. Caravantes, J.R. Sendra, D. Sevilla, C. Villarino, Covering Rational Surfaces with Rational Parametrization Images. Mathematics 2021, 9, 338. 

\bibitem{ee} D. Eisenbud and E. G. Evans, Every algebraic set in $n$-space is the intersection of $n$ hypersurfaces. Invent. Math. 19 (1973), 107--112.

\bibitem{fis} G. Fischer, Complex Analytic Geometry, Lect. Notes Math. 538, Springer-Verlag, Berlin-Heidelberg, 1976.

\bibitem{h} R. Hartshorne, Algebraic Geometry, Springer-Verlag, Berlin-New York, 1977.

\bibitem{lam} T. Y. Lam, Serre's conjecture. Lect. Notes Math. 635, Springer-Verlag, Berlin-New York, 1978. 

\end{thebibliography}
\end{document}